\documentclass[11pt,reqno]{amsart}

\usepackage[utf8]{inputenc}
\usepackage[T1]{fontenc}
\usepackage{amsmath,amssymb,amsthm}
\usepackage{mathrsfs}
\usepackage[english]{babel}
\usepackage[a4paper,margin=3.2cm]{geometry}
\usepackage{xcolor}
\usepackage[colorlinks=true,linkcolor=blue!60!black,citecolor=blue!60!black,urlcolor=blue!60!black]{hyperref}

\theoremstyle{plain}
\newtheorem{theorem}{Theorem}[section]
\newtheorem{lemma}[theorem]{Lemma}
\newtheorem{proposition}[theorem]{Proposition}
\newtheorem{corollary}[theorem]{Corollary}
\newtheorem{conjecture}[theorem]{Conjecture}
\newtheorem*{theoremA}{Theorem A}
\newtheorem*{theoremK}{Theorem (Kocsard \cite{Koc13})}

\theoremstyle{definition}
\newtheorem{definition}[theorem]{Definition}
\newtheorem{question}[theorem]{Question}
\newtheorem{construction}[theorem]{Construction}

\theoremstyle{remark}
\newtheorem{remark}[theorem]{Remark}

\newcommand{\R}{\mathbb{R}}
\newcommand{\Z}{\mathbb{Z}}
\newcommand{\T}{\mathbb{T}}
\newcommand{\N}{\mathbb{N}}
\newcommand{\Q}{\mathbb{Q}}
\newcommand{\Homeo}{\operatorname{Homeo}}
\newcommand{\Diff}{\operatorname{Diff}}
\newcommand{\Stab}{\operatorname{Stab}}
\newcommand{\Inv}{\operatorname{Inv}}

\newcommand{\CzM}{C^{0}(M)}
\newcommand{\coc}{Z^{1}(\alpha)}
\newcommand{\cob}{B(\alpha)}
\newcommand{\norm}[1]{\left\|#1\right\|}
\newcommand{\normz}[1]{\left\|#1\right\|_{C^{0}}}
\newcommand{\e}{\mathbf{e}}
\DeclareMathOperator{\diam}{diam}
\DeclareMathOperator{\dist}{dist}

\begin{document}

\title[Classification of cohomologically stable actions]{Classification of some cohomologically $C^0$ - stable continuous group actions on metric spaces}
\author{Boris Petkovi\'c}
\address{University of Banja Luka \\ Mladena Stojanovi\'ca 2
78 000 Banja Luka
Bosnia and Herzegovina}
\email{boris.petkovic@pmf.unibl.org}

\subjclass[2020]{Primary: 37B05; Secondary: 37A20, 37C15, 37E10}
\keywords{cohomological equations, cohomological stability, topological
dynamics, group actions, commuting diffeomorphisms, simultaneously
Diophantine vectors}

\begin{abstract}
After Katok \cite{Kat01}, a homeomorphism $f\colon M\to M$ of a compact
metric space is said to be cohomologically $C^0$-stable if its
space of real $C^0$-coboundaries is closed in $C^0(M)$. Kocsard
\cite{Koc13} proved that this is the case if and only if $f$ is periodic. We extend the classification to actions of arbitrary finitely generated groups: an action $\alpha\colon G\to\Homeo(M)$ is
cohomologically $C^0$ - stable if and only if the image $\alpha(G)$ is a finite group. In particular this settles the case of $\Z^k$-actions
generated by finitely many commuting homeomorphisms. Notably, no
amenability assumption is needed: we explain why spectral-gap phenomena for non-amenable actions, which do produce cohomological stability in H\"older, Sobolev and $L^2$ categories, are invisible to the uniform norm. We also discuss the genuinely different smooth category and state a conjecture regarding cohomological $C^\infty$-stability of $\Z^k$-action by smooth circle diffeomorphisms without
periodic orbits, connecting the problem with works of Moser, Fayad-Khanin, Avila-Kocsard and
Petkovi\'c.
\end{abstract}

\maketitle

\section{Introduction}\label{sec:intro}

Cocycles and cohomological equations are ubiquitous in dynamics and
ergodic theory. We refer interested readers to Katok's survey \cite{Kat01} for a panorama. Throughout this note $(M,d)$ denotes a compact metric space and $\CzM$ the Banach space of real continuous functions on $M$ with the uniform norm $\normz{\phi}:=\sup_{x\in M}|\phi(x)|$. \\

For a single homeomorphism $f\colon M\to M$, a (real, continuous)
cocycle is simply a function $\phi\in\CzM$, and $\phi$ is a
coboundary if the cohomological equation
\[
\phi = u\circ f - u
\]
admits a solution $u\in\CzM$. Writing
$B(f,\CzM):=\{v\circ f-v : v\in\CzM\}$, the homeomorphism $f$ is called
cohomologically $C^0$-stable when $B(f,\CzM)$ is closed in
$\CzM$. Stability is precisely the property that makes the classical
obstructions (in this case $f$-invariant probability measures) a complete criterion for solvability. Kocsard gave
the following complete classification.

\begin{theoremK}
A homeomorphism $f\colon M\to M$ is cohomologically $C^0$-stable if and
only if $f$ is periodic, i.e.\ it has finite order in $\Homeo(M)$.
\end{theoremK}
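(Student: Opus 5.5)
The plan is to prove the two implications separately. The easy direction is that periodic implies stable. Suppose $f^n=\mathrm{id}$. Define the averaging operator
\[
P\phi := \frac1n\sum_{j=0}^{n-1}\phi\circ f^j ,
\]
which is a bounded projection on $\CzM$. The claim to establish is $B(f,\CzM)=\ker P$, which is closed. One inclusion is immediate, since $P(u\circ f-u)=0$ by telescoping and $f^n=\mathrm{id}$. For the other, given $\phi$ with $P\phi=0$, I will write down the explicit solution
\[
u:=-\frac1n\sum_{k=0}^{n-1}\sum_{j=0}^{k}\phi\circ f^j .
\]
It is continuous, and a direct computation gives $u\circ f-u=\phi-P\phi=\phi$; this is the usual discrete Fourier-type inversion.

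For the converse, assume $f$ is not periodic; I want a sequence of coboundaries converging uniformly to a non-coboundary. The key tool is the open mapping theorem. If $B:=B(f,\CzM)$ were closed, then $L\colon \CzM/\R\to B$, $u\mapsto u\circ f-u$, would be a continuous bijection between Banach spaces. Its kernel consists of the $f$-invariant continuous functions, so the domain should really be $\CzM$ modulo that kernel. Then $L$ would have a bounded inverse, and we would get a constant $C$ such that every coboundary $\phi$ has a primitive $u$ with $\normz{u}\le C\normz{\phi}$, where we are free to add any invariant function to $u$. The goal is to contradict this a priori estimate.

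Non-periodicity gives, for every $N$, a point $x$ whose iterates $x,f(x),\dots,f^{N}(x)$ are pairwise distinct. The reason is that otherwise $M=\bigcup_{p\le N}\mathrm{Fix}(f^p)$, a finite union of closed sets, and then a Baire/compactness argument shows $f^{N!}=\mathrm{id}$. By continuity, a small ball $U$ around $x$ then has $U,f(U),\dots,f^N(U)$ pairwise disjoint. Take a bump $\psi\ge0$ with $\psi(x)=1$, supported in $U$, and set
\[
u:=\sum_{j=0}^{N}\min(j,N-j)\,\psi\circ f^{-j}/N\cdot N .
\]
This is a tent profile along the orbit segment of height about $N/2$, with increments of size $1$. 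Then $\phi=u\circ f-u$ has $\normz{\phi}\le 1$ while $u(f^{N/2}x)\approx N/2$. The remaining point is that no other primitive is small: any other primitive differs from $u$ by an invariant $h$, and $h$ is constant along the orbit of $x$. Since $u$ vanishes at $x$ but equals about $N/2$ at $f^{N/2}x$, we get $\normz{u+h}\ge N/4$. Letting $N\to\infty$ contradicts the bound $C$.

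I expect the main obstacle to be the non-periodic step: showing that failure of periodicity yields arbitrarily long wandering orbit segments with disjoint neighborhoods. The delicate case is a non-periodic $f$ in which every point is periodic but the periods are unbounded. The pairwise-distinct-iterates argument handles this, since it needs only one point of large period, not aperiodicity. I will justify carefully that $M=\bigcup_{p\le N}\mathrm{Fix}(f^p)$ forces $f^{N!}=\mathrm{id}$, which in fact holds directly because every point is fixed by $f^{N!}$.
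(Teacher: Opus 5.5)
Your proof is correct, and its skeleton matches the paper's. The periodic half is Lemma~\ref{lem:finite-stable} for $G=\Z$: your $\ker P$ is the vanishing of the Birkhoff sum over a period, and your primitive is the paper's group average up to reindexing. The converse is reduced to the estimate $\dist_{C^0}(u,\Inv(f))\le C\normz{u\circ f-u}$ exactly as in Lemma~\ref{lem:open-mapping}. Your point with $N+1$ distinct iterates is the $\Z$-case of Lemmas~\ref{lem:orbits}--\ref{lem:diam}.

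The genuine difference is the test function. You use the weighted sum $\sum_{j=0}^{N}\min(j,N-j)\,\psi\circ f^{-j}$ of bumps on the pairwise disjoint sets $f^j(U)$. This is essentially Kocsard's original construction, except that you only need $N+1$ distinct iterates rather than a period exceeding $2^n$. Disjointness lets you compute $u\circ f-u=\sum_{i=0}^{N-1}(w_{i+1}-w_i)\,\psi\circ f^{-i}$ exactly, where $w_j=\min(j,N-j)$ are your tent weights.

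The paper instead takes the supremum \eqref{eq:tent} of exactly transported bumps over a word ball. There the bound $1/R$ on $\delta u_R$ comes from $|\tau(sh)-\tau(h)|\le 1/R$, with no disjointness at all. The only geometric input is the separation $\varepsilon_R$ of the finitely many points of $E_R$, used to force $u_R(z^*)=0$. For $\Z$ the two routes are equally short. The paper's is the one that extends to arbitrary finitely generated $G$: there, point stabilizers force the translates $g\cdot U$ to overlap, so a sum of bumps would lose control of its increments.

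Minor cleanups:
\begin{itemize}
\item Drop the spurious factor ``$/N\cdot N$'' in your definition of $u$.
\item Take $0\le\psi\le 1$, so that $\normz{u\circ f-u}\le 1$.
\item Drop the Baire/compactness detour: if every point has period at most $N$, then $f^{N!}=\mathrm{id}$ directly.
\item Note that $u(f^m x)=\min(m,N-m)$ uses disjointness in both directions. For instance, for $j>m$ one has $f^{m-j}(x)\notin U$ because $x\notin f^{j-m}(U)$.
\end{itemize}
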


The purpose of this note is to observe that the classification extends to actions of arbitrary finitely generated groups,
with a proof in the spirit of \cite{Koc13} but with one new geometric
ingredient. Given a group $G$ and an action $\alpha\colon G\to\Homeo(M)$ we call $\alpha$ periodic when its
image $\alpha(G)\leq\Homeo(M)$ is a finite group. Cocycles,
coboundaries and cohomological $C^0$-stability for $\alpha$ are defined in Section 2. For $G=\Z$ all notions reduce to the classical
ones.

Our main theorem is the following one.

\begin{theoremA}
Let $G$ be a finitely generated group and let
$\alpha\colon G\to\Homeo(M)$ be an action on a compact metric space
$M$. Then $\alpha$ is cohomologically $C^0$-stable if and only if
$\alpha$ is periodic, i.e.\ $\alpha(G)$ is finite.
\end{theoremA}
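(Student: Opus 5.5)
The plan is to pass through the open mapping theorem. Take a finite symmetric generating set $S=\{s_1,\dots,s_k\}$ of $G$, with word length $|g|$ and balls $B_n=\{g:|g|\le n\}$. A coboundary is determined by its values on $S$, so $B(\alpha)$ is the range of the bounded operator
\[
\delta\colon \CzM\to \CzM^k,\qquad \delta u=(u\circ\alpha(s_i)-u)_i .
\]
I am assuming that $B(\alpha)$ is topologized as this image, as Section 2 presumably does. The kernel of $\delta$ is the space $\Inv$ of $\alpha$-invariant continuous functions. So $\alpha$ is cohomologically $C^0$-stable if and only if there is a constant $C$ such that
\[
\dist(u,\Inv)\le C\max_i\normz{u\circ\alpha(s_i)-u}\quad\text{for all } u\in\CzM .
\]
Both directions will be proved through this inequality.

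\emph{Periodic implies stable.} Let $H=\alpha(G)$ be finite and put $Pu=\frac1{|H|}\sum_{h\in H}u\circ h$, which lies in $\Inv$. Every $h\in H$ equals $\alpha(g)$ for some $g$ with $|g|\le D$, where $D$ is a bound depending only on $H$. A telescoping sum along such a word gives $\normz{u\circ h-u}\le D\max_i\normz{u\circ\alpha(s_i)-u}$. Since
\[
u-Pu=-\tfrac1{|H|}\sum_{h\in H}(u\circ h-u),
\]
the inequality holds with $C=D$.

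\emph{Stable implies periodic.} Suppose $\alpha(G)$ is infinite. The first step is to show that orbit sizes are unbounded. If every orbit had at most $N$ points, every stabilizer $G_x$ would have index at most $N$. A finitely generated group has only finitely many subgroups of index at most $N$, so $K=\bigcap_x G_x$ is a finite-index subgroup acting trivially, and then $\alpha(G)$ would be finite. Hence for each $n$ there is a point $x$ whose orbit has more than $|B_n|$ points. Consequently there is $z=\alpha(g_0)x$ with $z\notin\{\alpha(h)x : h\in B_n\}$.

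Next comes the key construction, a "cone" function. Choose a bump $\varphi\in\CzM$ with $0\le\varphi\le1$ and $\varphi(x)=1$, supported in a ball around $x$ so small that it misses the finite set $\{\alpha(h)^{-1}z : h\in B_n\}$. Define
\[
u(y)=\max_{g\in B_n}\varphi\bigl(\alpha(g)^{-1}y\bigr)\Bigl(1-\tfrac{|g|}{n}\Bigr).
\]
Then $u$ is continuous, $u(x)=1$ (take $g=e$), and $u(z)=0$ by the choice of support. Any $v\in\Inv$ satisfies $v(x)=v(z)$, so $\dist(u,\Inv)\ge\frac12$.

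On the other hand, reindexing $g\mapsto s^{-1}g$ gives
\[
u(\alpha(s)^{-1}y)=\max_{g}\varphi(\alpha(g)^{-1}y)\Bigl(1-\tfrac{|s g|}{n}\Bigr),
\]
taken over $g$ with $|sg|\le n$. Since $\bigl||sg|-|g|\bigr|\le1$, the terms differ by at most $1/n$. The only terms gained or lost are those with $|g|\in\{n,n+1\}$, and their weights are at most $1/n$ in absolute value. Hence $\normz{u\circ\alpha(s)-u}\le 2/n$ for each $s\in S$. Letting $n\to\infty$ contradicts the inequality, so $B(\alpha)$ is not closed.

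I expect the main obstacle to be the geometric step: producing, for each $n$, a point $x$ and a point $z$ in its orbit that is not reached by $B_n$. When $G=\Z$ this is Kocsard's argument. For general $G$ I would use the finite-index-stabilizer argument above. This is also where finite generation is used, twice: in the finiteness of the set of subgroups of bounded index, and in having a word metric so that the cone function varies slowly along generators.
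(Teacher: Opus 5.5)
Your proposal is correct and is essentially the paper's proof: the reduction via the open mapping theorem to $\dist(u,\Inv)\le C\norm{\delta u}$, the unbounded-orbit argument via the finiteness of the set of subgroups of bounded index, and the maximum of transported bumps weighted by $1-|g|/n$ are exactly Lemmas~\ref{lem:open-mapping} and~\ref{lem:orbits} and Construction~\ref{con:tent}. The deviations are minor. For the periodic direction you prove the coercive inequality with the averaging projection $P$, instead of identifying $\cob$ as the cocycles vanishing on $\ker\alpha$. You also take $z\notin B_n\cdot x$ directly, with the bump vanishing on $B_n\cdot z$, rather than using the diameter lemma and the separation constant $\varepsilon_R$. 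Finally, in the reindexing step the weight should be $1-|s^{-1}g|/n$; this is harmless because $S$ is symmetric.
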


As a direct consequence of Theorem A we get the following corollary. 

\begin{corollary}\label{cor:Zk}
An action $\alpha\colon\Z^k\to\Homeo(M)$ generated by commuting
homeomorphisms $f_1,\dots,f_k$ is cohomologically $C^0$-stable if and
only if some finite-index sublattice $\Lambda\leq\Z^k$ acts trivially,
i.e.\ the group $\langle f_1,\dots,f_k\rangle\leq\Homeo(M)$ is finite.
\end{corollary}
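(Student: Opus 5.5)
The plan is to deduce Corollary~\ref{cor:Zk} directly from Theorem~A, taking $G=\Z^k$. This group is finitely generated, so Theorem~A applies verbatim. It says that $\alpha$ is cohomologically $C^0$-stable if and only if $\alpha(\Z^k)$ is finite. Since $\alpha(\Z^k)=\langle f_1,\dots,f_k\rangle\le\Homeo(M)$, this is already the second formulation in the corollary. What remains is to show that finiteness of $\alpha(\Z^k)$ is equivalent to some finite-index sublattice $\Lambda\le\Z^k$ acting trivially. I will also check that the definitions of cocycles and coboundaries from Section~2, specialised to $G=\Z^k$, are the standard ones for commuting homeomorphisms. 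This should be automatic, since the corollary is just the case $G=\Z^k$ of the general definition.

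For the equivalence, first suppose $\Lambda\le\Z^k$ has finite index and $\alpha|_\Lambda$ is trivial, so $\Lambda\subseteq\ker\alpha$. Then $\alpha$ factors through the finite quotient $\Z^k/\Lambda$, so $|\alpha(\Z^k)|\le[\Z^k:\Lambda]<\infty$. Conversely, suppose $\alpha(\Z^k)$ is finite of order $N$. Then $\Lambda:=\ker\alpha$ has index $N$ in $\Z^k$, and being a subgroup of $\Z^k$ it is a lattice. Alternatively, each $f_i$ has order dividing $N$, so $N\Z^k\subseteq\ker\alpha$ is an explicit finite-index sublattice acting trivially.

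None of these steps is a real obstacle. All the substance lies in Theorem~A, and the corollary only translates ``finite image'' into the language of sublattices via the first isomorphism theorem. The only point needing care is the consistency of the definitions mentioned above.
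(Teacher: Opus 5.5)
Your proposal is correct and matches the paper, which presents the corollary as a direct consequence of Theorem A applied to $G=\Z^k$. Your first-isomorphism-theorem argument, taking $\Lambda=\ker\alpha$ or $N\Z^k$, correctly supplies the one remaining step: translating ``finite image'' into ``some finite-index sublattice acts trivially''.
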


For $k=1$, in Theorem A, one recovers precisely Kocsard's theorem. Note that for $k\geq 2$ the correct dichotomy is finiteness of the image since the individual
generators may well be periodic while the action is unstable
(e.g. two rotations of finite but coprime orders generate a finite
group - stable, while a rational and an irrational rotation
generate an infinite one - unstable). \\

Two features of Theorem A deserve emphasis, therefore for each of them we rdevote a
section. \\

First, no amenability hypothesis appears. This may be
surprising since the instability half of the Theorem A in the proof requires functions that are simultaneously almost invariant under all generators, and the existence of almost invariant objects is usually an amenability phenomenon. The resolution is that almost invariance here is measured in the uniform norm, where the so called tent functions of the word metric
on large orbits are slowly varying on every finitely generated
group. Expansion and spectral gaps are $\ell^2$-obstructions and are
invisible to $\ell^\infty$. However, amenability does play a role in this theorem, but in the description of the
closure of the coboundary space (see Section \ref{sec:amenable}), where
invariant measures (guaranteed to exist exactly in the amenable
case) provide the natural obstructions, following \cite{MOP77}.
In Section \ref{sec:gap} we discuss in detail why natural non-amenable candidates for stable infinite actions (such as free subgroups of
$SO(3)$ acting on $S^2$ with spectral gap \cite{Dri84,BG08}) fail to be counterexamples, and in which function spaces the gap does
produce closed ranges. \\

Second, Theorem A confirms that the $C^0$-category is arithmetically soft. Stability is decided by a crude algebraic invariant (finiteness of the image) and no Diophantine-type condition can be detected. This is in sharp contrast with the smooth category, where,
for a single minimal circle diffeomorphism, Avila and Kocsard
\cite{AK11} proved that cohomological $C^\infty$-stability holds if and only if the rotation number is Diophantine. In Section \ref{sec:smooth}, in the form of a conjecture, we formulate the natural $\Z^k$-analogue: cohomological $C^\infty$-stability of a $\Z^k$-action by circle diffeomorphisms without periodic orbits should be equivalent to the simultaneous
Diophantine condition (SDC) on the rotation vector, the condition
introduced by Moser \cite{Mos90} and shown by Fayad and Khanin
\cite{FK09} to imply smooth simultaneous linearization. We explain why one implication of the conjecture follows from \cite{FK09} together with a Fourier computation for the linear model, why the converse is open precisely for non-linearizable actions with non-SDC rotation vectors, and how the question connects with  perturbative classification of Diophantine $\Z^m$-actions on higher-dimensional tori in \cite{P21}.

\subsection*{Organization of the paper.} Section \ref{sec:prelim} sets notation.
Section \ref{sec:stable} proves the easier half of Theorem A (periodic $\Rightarrow$ stable) by finite-group averaging. Section \ref{sec:unstable} proves the main half of Theorem A by constructing word-ball tent functions. Section \ref{sec:amenable} discusses the closure of the coboundary space and
the true role of amenability. Section \ref{sec:gap} discusses spectral gaps and  why natural non-amenable candidates for stable infinite actions fail to be counterexamples. Section \ref{sec:smooth} treats the $C^0$-versus-$C^\infty$ dichotomy and states the  cohomological $C^\infty$ - stability conjecture. Last Section \ref{sec:questions} collects open questions.

\section{Notation and preliminaries}\label{sec:prelim}

Let $G$ be a group generated by a finite symmetric set
$S=S^{-1}$ with $e\notin S$, let $|g|=|g|_S$ denote the word length metric, and let $\alpha\colon G\to\Homeo(M)$ be an action on the compact metric space $(M,d)$. For $x \in M$ and $g \in G$ we write $g\cdot x$ instead of $\alpha(g)(x)$.

\begin{definition}\label{def:cocycle}
A (real, continuous) cocycle over $\alpha$ is a map
$c\colon G\times M\to\R$ such that $c(g,\cdot)\in\CzM$ for every
$g\in G$ and
\begin{equation}\label{eq:cocycle}
c(gh,x) = c(g,h\cdot x) + c(h,x),
\qquad \forall\, g,h\in G,\ \forall\, x\in M.
\end{equation}
The linear space of cocycles is denoted by $\coc$. Every $u\in\CzM$
determines the coboundary $\delta u\in\coc$ defined as
\[
(\delta u)(g,x) := u(g\cdot x)-u(x),
\]
and we set $\cob:=\delta\bigl(\CzM\bigr)\subset\coc$.
\end{definition}

We note that a cocycle is determined by its values on the generating set $S$ since \eqref{eq:cocycle} gives $c(e,\cdot)\equiv 0$,
$c(g^{-1},x)=-c(g,g^{-1}\cdot x)$, and the value on any word in $S$ is
a finite sum of translates of the generator values. In particular
\begin{equation}\label{eq:growth}
\normz{c(g,\cdot)} \le |g|\,\max_{s\in S}\normz{c(s,\cdot)},
\qquad \forall g\in G .
\end{equation}
We therefore topologize $\coc$ by the norm
\[
\norm{c} := \max_{s\in S}\ \normz{c(s,\cdot)} .
\]
Different finite generating sets give equivalent norms, by
\eqref{eq:growth}.

\begin{remark}\label{rem:closed-in-product}
The evaluation map $c\mapsto (c(s,\cdot))_{s\in S}$ embeds $\coc$
isometrically as a closed linear subspace of $\CzM^{S}$ (with
the max norm) since the constraints imposed on a tuple
$(\phi_s)_{s\in S}$ by the relations of $G$ through \eqref{eq:cocycle}
are finite linear combinations of compositions with fixed
homeomorphisms, hence closed conditions. Consequently
$(\coc,\norm{\cdot})$ is a Banach space. For $G=\Z^k$ with standard
generator set $S=\{\e_1, \ldots, \e_k$\}, $\coc$ is exactly the space of tuples
$(\phi_1,\dots,\phi_k)\in\CzM^k$ satisfying the compatibility
relations
\[
\phi_i\circ f_j - \phi_i \;=\; \phi_j\circ f_i-\phi_j,
\qquad 1\le i<j\le k,
\]
where $f_i:=\alpha(\e_i)$, and $\delta u = (u\circ f_1-u,\dots,
u\circ f_k-u)$.
\end{remark}

The kernel of $\delta$ is the space
$\Inv(\alpha):=\{v\in\CzM : v\circ\alpha(g)=v,\ \forall g\in G\}$ of
continuous $\alpha$-invariant functions. Every $v\in\Inv(\alpha)$ is
constant on each orbit and hence on each orbit closure. Note that
$\norm{\delta u}\le 2\normz{u}$, so
$\delta\colon\CzM\to\coc$ is a bounded operator.

\begin{definition}
The action $\alpha$ is cohomologically $C^0$-stable when $\cob$
is closed in $\coc$.
\end{definition}

Previous definition is equivalently to $\cob$ beeing closed in $\CzM^S$) by Remark~\ref{rem:closed-in-product}. \\

In the special case $G=\Z$ this coincides with the definition in
\cite{Kat01} and \cite{Koc13}. \\

Given $x\in M$, the Schreier graph of the orbit
$G\cdot x$ has vertex set $G\cdot x$ and an edge between $y$ and
$s\cdot y$ for each $y\in G\cdot x$ and $s\in S$. We write
$d_{\mathrm{orb}}$ for the associated graph metric. The graph is connected and of degree at most $|S|$, and
\begin{equation}\label{eq:orb-vs-word}
d_{\mathrm{orb}}(x, g\cdot x)\ \le\ |g|,\end{equation}
with equality realized: every point at graph
distance $D$ from $x$ is of the form $g\cdot x$ with $|g|=D$.

\section{Periodic actions are stable}\label{sec:stable}

The following lemma extends Lemma 3.1 from \cite{Koc13} and identifies the coboundary space of a periodic action explicitly. The averaging trick used in the proof is the group-action analogue of a formula from \cite{MOP77}.

\begin{lemma}\label{lem:finite-stable}
Assume $\Gamma:=\alpha(G)$ is finite and let $K:=\ker\alpha
=\{g\in G:\alpha(g)=\mathrm{id}_M\}$. Then
\begin{equation}\label{eq:char-finite}
\cob \;=\; \bigl\{\, c\in\coc \;:\; c(n,\cdot)\equiv 0
\ \ \forall\, n\in K \,\bigr\}.
\end{equation}
In particular $\cob$ is closed, i.e.\ $\alpha$ is cohomologically
$C^0$-stable.
\end{lemma}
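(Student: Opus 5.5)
We prove the two inclusions in \eqref{eq:char-finite} separately; closedness then comes for free. The inclusion ``$\subseteq$'' is immediate. If $c=\delta u$ and $n\in K$, then $c(n,x)=u(n\cdot x)-u(x)=u(x)-u(x)=0$.

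For ``$\supseteq$'', take $c\in\coc$ with $c(n,\cdot)\equiv 0$ for all $n\in K$. The first step is to show that $c(g,\cdot)$ depends only on the coset $gK$. Indeed, for $n\in K$ the cocycle identity \eqref{eq:cocycle} gives
\[
c(gn,x)=c(g,n\cdot x)+c(n,x)=c(g,x).
\]
So $c$ descends to a well-defined function $\bar c\colon\Gamma\times M\to\R$, with $\bar c(\gamma,\cdot)=c(g,\cdot)$ for any $g$ satisfying $\alpha(g)=\gamma$. This $\bar c$ is a cocycle over the finite group $\Gamma$, and $\Gamma\cong G/K$ is finite by assumption.

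The second step is the averaging trick. Define
\[
u(x):=-\frac{1}{|\Gamma|}\sum_{\gamma\in\Gamma}\bar c(\gamma,x).
\]
This is a finite sum of continuous functions, so $u\in\CzM$. Now fix $g$ with $\alpha(g)=\eta$. By the cocycle identity, $\bar c(\gamma\eta,x)=\bar c(\gamma,\eta\cdot x)+\bar c(\eta,x)$. Summing over $\gamma\in\Gamma$ and reindexing $\gamma\eta\mapsto\gamma$, which is a bijection of $\Gamma$, gives
\[
\sum_\gamma \bar c(\gamma,x)=\sum_\gamma\bar c(\gamma,\eta\cdot x)+|\Gamma|\,\bar c(\eta,x).
\]
Dividing by $|\Gamma|$ yields $\bar c(\eta,x)=u(\eta\cdot x)-u(x)$, that is, $c(g,x)=(\delta u)(g,x)$. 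Hence $c\in\cob$.

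For closedness, note that the right-hand side of \eqref{eq:char-finite} is an intersection of kernels of the maps $c\mapsto c(n,\cdot)$ for $n\in K$. Each such map is linear, and it is bounded on $\coc$ by \eqref{eq:growth}. The subspace is therefore closed, so $\cob$ is closed.

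The only point needing care is the descent to $\Gamma$, i.e.\ checking that $c(\cdot,x)$ is constant on cosets of $K$ so that the sum over $\Gamma$ is well defined. The computation above settles this, and after it the argument is routine.
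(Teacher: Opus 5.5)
Your proof is correct and follows the paper's argument: you use the same descent of $c$ to a cocycle $\bar c$ over $\Gamma$ and the same averaging formula $u=-\frac{1}{|\Gamma|}\sum_{\gamma}\bar c(\gamma,\cdot)$. The only minor difference is in the closedness step, where you intersect the kernels over all $n\in K$ directly; this is legitimate because arbitrary intersections of closed subspaces are closed, and it avoids the paper's detour through finite generation of $K$ (likewise, checking right-invariance alone suffices for the descent, since the fibres of $\alpha$ are exactly the cosets $gK$).
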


\begin{proof}
If $c \in \cob$ then $c=\delta u$ for some contiouns function $u$ and if $n\in K$ then
$c(n,x)=u(n\cdot x)-u(x)=u(x)-u(x)=0$ for all $x \in M$. \\

Conversely, let $c\in\coc$ vanish on $K$. We first claim that
$c(g,x)$ depends only on the coset $gK$, i.e. only on
$\gamma=\alpha(g)\in\Gamma$. For $n\in K$,
\[
c(gn,x)=c(g,n\cdot x)+c(n,x)=c(g,x),
\qquad
c(ng,x)=c(n,g\cdot x)+c(g,x)=c(g,x),
\]
using $n\cdot y=y$ and $c(n,\cdot)\equiv 0$. Hence $c$ descends to a
cocycle $\bar c\colon\Gamma\times M\to\R$ over the (tautological)
action of the finite group $\Gamma$. Define
\[
u(x) := -\frac{1}{|\Gamma|}\sum_{\gamma\in\Gamma}\bar c(\gamma,x),
\qquad x\in M,
\]
which is continuous. For every $h\in\Gamma$, using
\eqref{eq:cocycle} in the form
$\bar c(\gamma,h\cdot x)=\bar c(\gamma h,x)-\bar c(h,x)$ and
reindexing $\gamma h\mapsto\gamma$:
\[
u(h\cdot x)-u(x)
= -\frac{1}{|\Gamma|}\sum_{\gamma\in\Gamma}
\bigl[\bar c(\gamma h,x)-\bar c(h,x)-\bar c(\gamma,x)\bigr]
= \bar c(h,x).
\]
Thus $c=\delta u$. \\

Let us now prove the closedness part of the lemma. Since $[G:K]=|\Gamma|<\infty$ and $G$ is finitely
generated, $K$ is finitely generated. 
Let $n_1,\dots,n_m$ be generators of $K$. If $c$ vanishes
on the $n_i$ then it vanishes on all of $K$, by \eqref{eq:cocycle}
and induction on word length in the $n_i^{\pm1}$. By
\eqref{eq:growth}, each linear map $c\mapsto c(n_i,\cdot)\in\CzM$ is
bounded on $\coc$. Therefore the right-hand side of
\eqref{eq:char-finite} is an intersection of kernels of finitely many
bounded operators, so it is closed.
\end{proof}

\begin{remark}
For $G=\Z^k$, \eqref{eq:char-finite} reads: $(\phi_1,\dots,\phi_k)$ is
a coboundary if and only if the Birkhoff sums associated with any basis $n_1,\dots,n_k$ of the finite-index sublattice $K=\ker\alpha$ vanish
identically. For $k=1$, $K=q\Z$ and one recovers exactly the
characterization $\sum_{j=0}^{q-1}\phi(f^j(x))=0$ in Lemma 3.1 of
\cite{Koc13}.
\end{remark}

\section{Non-periodic actions are unstable}\label{sec:unstable}

We now prove the main implication of Theorem A. The functional-analytic part is identical to that one in Lemma 3.2 of \cite{Koc13}.

\begin{lemma}\label{lem:open-mapping}
If $\cob$ is closed in $\coc$, then there exists $C=C(\alpha)>0$ such
that
\begin{equation}\label{eq:gap-ineq}
\dist_{C^0}\bigl(u,\Inv(\alpha)\bigr)
\;\le\; C\,\norm{\delta u}
\;=\; C\,\max_{s\in S}\ \normz{\,u\circ\alpha(s)-u\,},
\qquad \forall\, u\in\CzM .
\end{equation}
\end{lemma}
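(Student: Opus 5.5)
This is the open mapping theorem applied to $\delta$ after passing to the quotient by its kernel. The operator $\delta\colon\CzM\to\coc$ is bounded, since $\norm{\delta u}\le 2\normz{u}$. Its kernel is $\Inv(\alpha)$, which is closed in $\CzM$ because it is the intersection over $s\in S$ of the kernels of the bounded operators $u\mapsto u\circ\alpha(s)-u$. The quotient $X:=\CzM/\Inv(\alpha)$ with the quotient norm $\norm{[u]}=\dist_{C^0}(u,\Inv(\alpha))$ is therefore a Banach space. The operator $\delta$ factors through a bounded injective linear map $\bar\delta\colon X\to\coc$, $\bar\delta[u]=\delta u$. It is well defined because $\delta$ kills $\Inv(\alpha)$, and $\norm{\bar\delta[u]}\le 2\norm{[u]}$ follows by taking the infimum of $\norm{\delta(u+v)}\le 2\normz{u+v}$ over $v\in\Inv(\alpha)$.

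Now assume $\cob$ is closed in $\coc$. By Remark~\ref{rem:closed-in-product}, $\coc$ is a Banach space, so $\cob$ is a Banach space in its own right. The map $\bar\delta\colon X\to\cob$ is then a continuous linear bijection between Banach spaces. By the open mapping theorem (bounded inverse theorem) its inverse is bounded, i.e.\ there is $C>0$ with $\norm{[u]}\le C\norm{\bar\delta[u]}$ for all $[u]\in X$. Unwinding the definitions, this is exactly \eqref{eq:gap-ineq}. The final equality in \eqref{eq:gap-ineq} is just the definition of $\norm{\cdot}$ on $\coc$, since $(\delta u)(s,\cdot)=u\circ\alpha(s)-u$.

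There is no real obstacle here. The only points to check are completeness of the two spaces and closedness of the kernel. Completeness of $\coc$ comes from Remark~\ref{rem:closed-in-product}, and completeness of $\cob$ comes from the hypothesis that it is closed in $\coc$. Closedness of $\Inv(\alpha)$ guarantees that the quotient is a normed, hence Banach, space.
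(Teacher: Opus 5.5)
Your proposal is correct and follows the paper's proof exactly: you pass to the quotient $\CzM/\Inv(\alpha)$, note that the factor map $\bar\delta$ is bounded with norm at most $2$ and bijective onto $\cob$, and apply the open mapping theorem to get a bounded inverse. You also check explicitly that $\Inv(\alpha)$ is closed, which the paper uses without comment.
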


\begin{proof}
The quotient $C^0_\alpha(M):=\CzM/\Inv(\alpha)$ is a Banach space with
the quotient norm, and the factor operator
$\bar\delta\colon C^0_\alpha(M)\to\cob$, $\bar\delta(u+\Inv(\alpha))
:=\delta u$, is well defined, bounded (with norm $\le 2$, as in
\cite[Lemma 3.2]{Koc13}) and bijective onto $\cob$. If $\cob$ is
closed in the Banach space $\coc$, then $\cob$ is itself Banach and,
by the open mapping theorem, $\bar\delta$ is an isomorphism of Banach
spaces. Now \eqref{eq:gap-ineq} is the boundedness of
$\bar\delta^{\,-1}$.
\end{proof}

For every non-periodic action cohomological instability will follow precisely from the construction of functions violating condition \eqref{eq:gap-ineq}. We first prove two lemmas concerning unbounded orbits of group actions with infinite image, and their diameters.

\begin{lemma}\label{lem:orbits}
If $\alpha(G)$ is infinite, then $\sup_{x\in M}\,\#(G\cdot x)=\infty$.
\end{lemma}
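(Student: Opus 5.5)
We argue by contraposition: assuming $\#(G\cdot x)\le N$ for all $x\in M$ and some fixed $N$, we show that $\alpha(G)$ is finite. The idea is that uniformly bounded orbits force a finite-index subgroup of $G$ to fix every point, so that subgroup lies in $\ker\alpha$.

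First, for each $x\in M$ the stabilizer $\Stab(x)=\{g\in G: g\cdot x=x\}$ has index $[G:\Stab(x)]=\#(G\cdot x)\le N$ by the orbit--stabilizer correspondence. Next we use a standard fact: a finitely generated group has only finitely many subgroups of index at most $N$. Indeed, a subgroup $H$ of index $m\le N$ is the stabilizer of a point in the action of $G$ on $G/H$. After labelling the cosets by $\{1,\dots,m\}$, this action is a homomorphism $G\to \mathrm{Sym}(m)$. Such a homomorphism is determined by the images of the finitely many generators in $S$, so there are finitely many of them, and hence finitely many possible $H$.

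Let $H_1,\dots,H_r$ be the finitely many subgroups of index at most $N$ in $G$, and set $K_0:=\bigcap_{i=1}^r H_i$. A finite intersection of finite-index subgroups has finite index, by Poincaré's lemma: $[G:K_0]\le\prod_i[G:H_i]$. Every stabilizer $\Stab(x)$ is one of the $H_i$, so $K_0\subseteq\Stab(x)$ for all $x\in M$. This means $\alpha(g)=\mathrm{id}_M$ for every $g\in K_0$, i.e. $K_0\subseteq\ker\alpha$. Therefore
\[
\#\alpha(G)=[G:\ker\alpha]\le[G:K_0]<\infty,
\]
which contradicts the assumption that $\alpha(G)$ is infinite.

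The only substantive step is the finiteness of the set of subgroups of bounded index. This is exactly where finite generation of $G$ enters: the orbit sizes alone only control the index of each individual stabilizer. The passage from finitely many stabilizer types to a global finite-index kernel is then immediate. Compactness and continuity are not needed here.
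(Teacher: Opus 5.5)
Your proof is correct and is essentially the paper's own argument. Bounded orbits give stabilizers of index at most $N$; finite generation leaves only finitely many such subgroups, by counting homomorphisms $G\to\mathrm{Sym}(n)$; and their finite-index intersection lies in $\ker\alpha$, so $\alpha(G)$ is finite (you merely spell out the orbit--stabilizer and Poincaré index bounds that the paper leaves implicit).
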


\begin{proof}
Suppose $\#(G\cdot x)\le N$ for all $x\in M$. Then every stabilizer
$\Stab(x)$ has index at most $N$ in $G$. A subgroup of index
$n\le N$ arises as a point stabilizer of a transitive $G$-action on
$\{1,\dots,n\}$, i.e.\ from a homomorphism $G\to\mathrm{Sym}(n)$.
Since $G$ is finitely generated there are only finitely many such
homomorphisms, hence only finitely many subgroups of index $\le N$.
Their intersection $\Lambda\le G$ has finite index and is contained in
$\Stab(x)$ for every $x$, so $\alpha(\Lambda)=\{\mathrm{id}_M\}$ and
$\alpha(G)$ is a quotient of the finite group $G/\Lambda$ which is a contradiction.
\end{proof}

\begin{lemma}\label{lem:diam}
If $\alpha(G)$ is infinite, then for every $R\in\N$ there exists
$x_0\in M$ whose orbit Schreier graph has diameter at least $R+1$. Moreover, there exists $z^*\in G\cdot x_0$ with
$d_{\mathrm{orb}}(x_0,z^*)= R+1$.
\end{lemma}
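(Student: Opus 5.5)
The plan is to deduce Lemma~\ref{lem:diam} from Lemma~\ref{lem:orbits} by a ball-counting argument in the Schreier graph. Fix $R\in\N$. Every orbit Schreier graph is connected and has degree at most $|S|$. Hence a ball of radius $r$ in it contains at most
\[
V(r):=\sum_{j=0}^{r}|S|^j
\]
vertices. If such a graph has diameter at most $R$, then the ball of radius $R$ around any vertex is the whole orbit, so the orbit has at most $V(R)$ points. By Lemma~\ref{lem:orbits}, $\sup_{x}\#(G\cdot x)=\infty$, so we can choose $x_1\in M$ with $\#(G\cdot x_1)>V(R)$. The Schreier graph of $G\cdot x_1$ then has diameter at least $R+1$. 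This includes the case of an infinite orbit, whose connected, locally finite graph has infinite diameter.

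For the second assertion we also need the base point $x_0$ to see a point at distance exactly $R+1$. The diameter bound alone only gives two vertices $y,y'$ in the orbit with $d_{\mathrm{orb}}(y,y')\ge R+1$. We take $x_0:=y$, which is admissible because $G\cdot x_0=G\cdot x_1$ is the same Schreier graph. Now choose a geodesic path in the graph from $x_0$ to $y'$; it exists since the graph is connected and distances are finite integers. Along this path the distance from $x_0$ increases by exactly $1$ at each step and reaches a value at least $R+1$. So some vertex $z^*$ on the path satisfies $d_{\mathrm{orb}}(x_0,z^*)=R+1$ exactly. By the equality statement after \eqref{eq:orb-vs-word}, $z^*=g\cdot x_0$ with $|g|=R+1$, which is the form the subsequent tent-function construction will presumably use.

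The argument has no real obstacle; everything rests on Lemma~\ref{lem:orbits}. The one point needing care is that a large diameter must be realised from the chosen base point, which is why we re-centre at an endpoint of a long pair rather than keeping $x_1$. Alternatively, one can avoid re-centring: the ball of radius $R$ about $x_1$ has at most $V(R)<\#(G\cdot x_1)$ points, so some orbit point lies at distance at least $R+1$ from $x_1$, and a geodesic toward it gives $z^*$ directly with $x_0=x_1$.
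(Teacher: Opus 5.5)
Your proof is correct and follows the paper's argument: a ball-counting bound in the degree-$\le|S|$ Schreier graph, combined with Lemma~\ref{lem:orbits}, forces an orbit point at distance $\ge R+1$ from the base point, and truncating a geodesic then gives $z^*$ at distance exactly $R+1$. The differences are cosmetic: you use the sharper count $\sum_{j\le R}|S|^j$ instead of $(|S|+1)^D$, you treat finite and infinite orbits uniformly rather than splitting into cases, and your ``alternative'' without re-centring is exactly the paper's route.
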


\begin{proof}
Balls in a connected graph of degree $\le|S|$ satisfy
$\#B_{\mathrm{orb}}(x,D)\le (|S|+1)^{D}$. If some orbit is infinite, its
Schreier graph is infinite, connected and locally finite, hence has
points at every finite distance from $x_0$. If all orbits are finite,
Lemma~\ref{lem:orbits} provides orbits of cardinality
$>(|S|+1)^{R+1}$, forcing diameter $\ge R+1$ around any base point.
In either case, connectedness yields a point $z^*$ at graph distance
exactly $R+1$ from $x_0$ (truncate a geodesic to a farther point).
\end{proof}

The new ingredient in the proof is the following word - ball "tent" construction. It replaces
the sum of bumps spread along a $\Z$-orbit segment used in
\cite[eq.~(2)]{Koc13} by a maximum of bumps transported along a
word-metric ball. Using $\max$ instead of $\sum$ makes all
disjointness issues disappear (in particular those caused by point stabilizers,
which are absent when $k=1$ but unavoidable for general $G$).

\begin{construction}\label{con:tent}
Fix $R\in\N$, $R\ge 2$. Let $x_0$ and $z^*$ be as in
Lemma~\ref{lem:diam}. All the orbit points relevant below lie in the
finite set
\[
E_R := \{\, h\cdot x_0 \;:\; h\in G,\ |h|\le 2R \,\} \subset M ,
\]
and we let
\[
\varepsilon_R := \min\{\, d(p,q) \;:\; p\ne q,\ p,q\in E_R \,\} > 0,
\qquad
0 < r < \varepsilon_R .
\]
Define $\tau\colon G\to[0,1]$ by $\tau(g):=\max\{0,\,1-|g|/R\}$, so
$\tau(e)=1$ and $\tau(g)=0$ for $|g|\ge R$. Set
\begin{equation}\label{eq:tent}
u_R(y) \;:=\; \sup_{g\in G}\ \tau(g)\,
\Bigl(1-\frac{d\bigl(g^{-1}\cdot y,\;x_0\bigr)}{r}\Bigr)^{\!+},
\qquad y\in M,
\end{equation}
where $t^+:=\max\{t,0\}$. Since $\tau$ is supported on the finite ball
$\{|g|\le R-1\}$, the supremum in \eqref{eq:tent} is a maximum of
finitely many continuous functions of $y$ (together with the constant
$0$), so $u_R\in\CzM$ and $0\le u_R\le 1$.
\end{construction}

\begin{lemma}\label{lem:tent-properties}
The function $u_R$ of Construction~\ref{con:tent} satisfies:
\begin{enumerate}
\item[(i)] $u_R(x_0)=1$;
\item[(ii)] $u_R(z^*)=0$;
\item[(iii)] $\normz{\,u_R\circ\alpha(s)-u_R\,}\le \dfrac{1}{R}$ for
every $s\in S$; consequently $\norm{\delta u_R}\le 1/R$;
\item[(iv)] $\dist_{C^0}\bigl(u_R,\Inv(\alpha)\bigr)\ge \dfrac12$.
\end{enumerate}
\end{lemma}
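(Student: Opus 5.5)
We verify the four items directly from the definition \eqref{eq:tent}. Throughout, write $b(y):=\bigl(1-d(y,x_0)/r\bigr)^{+}\in[0,1]$, so that $u_R(y)=\sup_{g}\tau(g)\,b(g^{-1}\cdot y)$.

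Item (i) is immediate: the term $g=e$ gives $\tau(e)b(x_0)=1$, and every term is at most $1$.

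For (ii) we show that every term with $\tau(g)\neq 0$, i.e.\ with $|g|\le R-1$, vanishes at $z^*$. By Lemma~\ref{lem:diam} and \eqref{eq:orb-vs-word}, we can write $z^*=h\cdot x_0$ with $|h|=R+1$. Then $g^{-1}\cdot z^*=(g^{-1}h)\cdot x_0$, and $|g^{-1}h|\le (R-1)+(R+1)=2R$, so this point lies in $E_R$. It is distinct from $x_0$: otherwise $z^*=g\cdot x_0$, and \eqref{eq:orb-vs-word} would give $d_{\mathrm{orb}}(x_0,z^*)\le R-1<R+1$. Since both points lie in $E_R$ and are distinct, $d(g^{-1}\cdot z^*,x_0)\ge\varepsilon_R>r$, hence $b(g^{-1}\cdot z^*)=0$. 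This is the one place where the choice of $E_R$ (word length up to $2R$) and of $r<\varepsilon_R$ is used, and it is the step I expect to require the most care.

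For (iii), fix $s\in S$ and reindex the supremum by $g=sg'$:
\[
u_R(s\cdot y)=\sup_{g}\tau(g)\,b(g^{-1}s\cdot y)=\sup_{g'}\tau(sg')\,b(g'^{-1}\cdot y).
\]
Since $\bigl||sg'|-|g'|\bigr|\le 1$ and $t\mapsto\max\{0,1-t/R\}$ is $1/R$-Lipschitz, we have $|\tau(sg')-\tau(g')|\le 1/R$. Because $0\le b\le 1$, the two families $\{\tau(sg')b(g'^{-1}\cdot y)\}_{g'}$ and $\{\tau(g')b(g'^{-1}\cdot y)\}_{g'}$ differ termwise by at most $1/R$. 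The suprema are effectively maxima over finitely many nonzero terms together with $0$, so they also differ by at most $1/R$. This holds uniformly in $y$, giving $\normz{u_R\circ\alpha(s)-u_R}\le 1/R$, and taking the maximum over $s\in S$ gives $\norm{\delta u_R}\le 1/R$.

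For (iv), let $v\in\Inv(\alpha)$. Then $v$ is constant on the orbit $G\cdot x_0$, which contains $z^*$, so $v(x_0)=v(z^*)=:c$. By (i) and (ii),
\[
\normz{u_R-v}\ge\max\{|1-c|,|0-c|\}\ge\tfrac12 .
\]
Taking the infimum over $v\in\Inv(\alpha)$ yields $\dist_{C^0}\bigl(u_R,\Inv(\alpha)\bigr)\ge\tfrac12$.
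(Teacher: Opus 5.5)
Your proposal is correct and follows the paper's proof essentially step by step. It uses the $g=e$ term for (i) and the bound $|g^{-1}h|\le 2R$, which places $g^{-1}\cdot z^*\in E_R\setminus\{x_0\}$, for (ii). For (iii) it reindexes $g=sg'$ and uses the $1/R$-Lipschitz property of $\tau$, and for (iv) it uses that invariant functions are constant on the orbit $G\cdot x_0$ and takes the maximum of $|1-a|$ and $|a|$.
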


\begin{proof}
\begin{enumerate}
\item[(i)] Take $g=e$ in \eqref{eq:tent}. 

\item[(ii)] Let $g\in G$ with $\tau(g)>0$, i.e.\ $|g|\le R-1$. Then
$g^{-1}\cdot z^*$ is an orbit point of the form $h\cdot x_0$ with
$|h|\le |g|+ (R+1)\le 2R$ (write $z^*=g^*\cdot x_0$ with
$|g^*|=R+1$, by \eqref{eq:orb-vs-word}, and $h=g^{-1}g^*$). Hence
$g^{-1}\cdot z^*\in E_R$. Moreover $g^{-1}\cdot z^*\ne x_0$, since otherwise $z^*=g\cdot x_0$ and $d_{\mathrm{orb}}(x_0,z^*)\le|g|\le R-1<R+1$, which is a contradiction. Therefore $d\bigl(g^{-1}\cdot z^*,x_0\bigr)\ge
\varepsilon_R> r$, so every term in \eqref{eq:tent} vanishes at
$y=z^*$.

\item[(iii)] Fix $s\in S$ and $y\in M$. Substituting $g=sh$,
\[
u_R(s\cdot y)
= \sup_{g\in G}\ \tau(g)\Bigl(1-\tfrac{d(g^{-1}s\cdot y,\,x_0)}{r}
\Bigr)^{+}
= \sup_{h\in G}\ \tau(sh)\Bigl(1-\tfrac{d(h^{-1}\cdot y,\,x_0)}{r}
\Bigr)^{+} .
\]
Since word length is $1$-Lipschitz under left multiplication by a
generator, $\bigl|\tau(sh)-\tau(h)\bigr|\le 1/R$ for all $h$, and
since $0\le\bigl(1-d(h^{-1}\cdot y,x_0)/r\bigr)^{+}\le 1$,
\[
\bigl| u_R(s\cdot y)-u_R(y) \bigr|
\;\le\; \sup_{h\in G}\ \bigl|\tau(sh)-\tau(h)\bigr|
\;\le\; \frac1R .
\]

\item[(iv)] Every $v\in\Inv(\alpha)$ is constant on orbits, so
$v(x_0)=v(z^*)=:a$. Now by (i) and (ii),
$\normz{u_R-v}\ge\max\{|1-a|,\,|a|\}\ge\tfrac12$.
\end{enumerate}
\end{proof}

Now we are in a position to prove Theorem A.

\begin{proof}[Proof of Theorem A]
If $\alpha(G)$ is finite, apply Lemma~\ref{lem:finite-stable}.
Conversely, suppose $\alpha(G)$ is infinite and $\cob$ is closed. Let
$C>0$ be a constant as in Lemma~\ref{lem:open-mapping}. For each $R\ge 2$,
Construction~\ref{con:tent} and Lemma~\ref{lem:tent-properties} give
$u_R\in\CzM$ with
\[
\tfrac12 \;\le\; \dist_{C^0}\bigl(u_R,\Inv(\alpha)\bigr)
\;\le\; C\,\norm{\delta u_R} \;\le\; \frac{C}{R}
\;\xrightarrow[R\to\infty]{}\;0,
\]
a contradiction. Hence $\cob$ is not closed.
\end{proof}

\begin{remark}\label{rem:no-disjointness}
It is worth comparing our setting with that one in \cite{Koc13}. There, for $G=\Z$, the almost
invariant function is a sum of bumps carried by the disjoint
iterates $f^j(B_n)$, $|j|<2^n$, of a small ball, with the crucial
transported profile $d(f^{-j}(x),x_n)$. Disjointness of the iterates
is what requires choosing a point of period strictly greater then $2^n$. For general group $G$ the translates $g\cdot B(x_0,r)$, $|g|<R$, may overlap heavily (large
point stabilizers, non-free actions), and no choice of $r$ can
separate them. Formula \eqref{eq:tent} sidesteps this entirely: the
supremum of exactly transported profiles is automatically
$\alpha$-equivariantly compatible, no disjointness is used anywhere,
and the only geometric input is the separation of the finitely many
distinct orbit points in $E_R$. In particular, specializing to
$G=\Z$, \eqref{eq:tent} also gives a slightly shorter proof of
Kocsard's original theorem.
\end{remark}

\begin{remark}[Quantitative instability]\label{rem:quantitative}
Actually, our proof shows more: if $\alpha(G)$ is infinite, the inverse $\bar\delta^{\,-1}$ fails to be bounded at a rate controlled by the
diameters of orbit Schreier graphs. Since
$\diam\ge\log_{|S|+1}\#(\text{orbit})$, even actions with the fastest
possible orbit growth (e.g.\ boundary actions of free groups, or free
subgroups of $SO(3)$ acting on the sphere $\mathbb S^2$) admit functions with
$\norm{\delta u}\le 1/R$ at uniform distance greater or equal to $1/2$ from $\Inv(\alpha)$, merely with the base scale $r=r(R)$ deteriorating.
See Section \ref{sec:gap}.
\end{remark}

\section{The closure of the coboundary space and the role of
amenability}\label{sec:amenable}

Theorem A shows that amenability of $G$ (or of the action) is
irrelevant for the closedness of $\cob$. Where amenability does
enter the picture is in the description of the
closure $\overline{\cob}$, i.e. of the complete system of
cohomological obstructions in the sense of \cite[\S(a)]{Koc13}.

For $G=\Z$, Moulin~Ollagnier and Pinchon \cite{MOP77} proved
\[
\overline{B(f,\CzM)}^{\,C^0}
= \Bigl\{ \phi\in\CzM \;:\; \int_M\phi\,d\mu=0,\ \
\forall\,\mu\in\mathcal M(f) \Bigr\},
\]
where $\mathcal M(f)\neq\emptyset$ (by Krylov-Bogolyubov theorem) is the set of
$f$-invariant Borel probability measures. The general statement has the
following shape.

\begin{proposition}\label{prop:annihilator}
For any finitely generated $G$ and any action $\alpha$,
\[
\overline{\cob}
= \Bigl\{ c\in\coc \;:\; \textstyle\sum_{s\in S}\int_M c(s,\cdot)\,
d\lambda_s = 0 \ \text{ for every } (\lambda_s)_{s\in S}\in
\mathcal{A}(\alpha) \Bigr\},
\]
where $\mathcal A(\alpha)$ denotes the set of tuples of finite signed
Borel measures on $M$ satisfying
$\sum_{s\in S}\bigl(\alpha(s)^{-1}_{*}\lambda_s-\lambda_s\bigr)=0$.
\end{proposition}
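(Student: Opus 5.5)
This is a Hahn–Banach / annihilator statement. By Remark~\ref{rem:closed-in-product} we regard $\coc$ as a closed subspace of $\CzM^{S}$ with the max norm. By the Riesz representation theorem, the dual of $\CzM^{S}$ is the space of tuples $(\lambda_s)_{s\in S}$ of finite signed Borel measures, acting by $c\mapsto\sum_{s}\int c(s,\cdot)\,d\lambda_s$. We then use the standard consequence of Hahn--Banach: for a linear subspace $B$ of a normed space $X$, the closure $\overline{B}$ is exactly the set of $x\in X$ annihilated by every functional in $B^{\perp}$. Apply this with $X=\CzM^S$ and $B=\cob$. Since $\coc$ is closed in $X$ and contains $\cob$, the closure of $\cob$ in $\coc$ coincides with its closure in $X$. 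Hence $\overline{\cob}$ equals the set of $c\in\coc$ killed by every tuple in $\cob^{\perp}\subset(\CzM^S)^*$.

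It remains to identify $\cob^{\perp}$ with $\mathcal A(\alpha)$. A tuple $(\lambda_s)$ annihilates $\cob$ iff for every $u\in\CzM$
\[
0=\sum_{s\in S}\int_M \bigl(u\circ\alpha(s)-u\bigr)\,d\lambda_s
=\int_M u\,d\Bigl(\sum_{s\in S}\bigl(\alpha(s)_*\lambda_s-\lambda_s\bigr)\Bigr),
\]
using the change of variables $\int u\circ\alpha(s)\,d\lambda_s=\int u\,d(\alpha(s)_*\lambda_s)$. By uniqueness in the Riesz representation theorem, this holds for all $u$ iff the signed measure $\sum_s(\alpha(s)_*\lambda_s-\lambda_s)$ vanishes.

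The one point needing care is matching this with the paper's convention $\alpha(s)^{-1}_*\lambda_s$. The push-forward here is by $\alpha(s)$, so the computation gives $\alpha(s)_*$. Because $S=S^{-1}$, one can reindex $s\mapsto s^{-1}$, which relabels the tuple as $\lambda'_{s}:=\lambda_{s^{-1}}$; this exchanges the two conventions. So I would either read the push-forward convention in the definition of $\mathcal A(\alpha)$ as $\alpha(s)_*$, or state explicitly that the annihilator is the set described up to this relabelling. With either reading, the annihilator condition is precisely membership in $\mathcal A(\alpha)$, and no amenability or existence of invariant measures is used, which matches the claim that the statement holds for every finitely generated $G$.

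The main step is the Hahn--Banach closure identification; the rest is bookkeeping. The only subtlety I expect is this sign/inverse convention.
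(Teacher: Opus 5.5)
Your argument is the paper's. You embed $\coc$ isometrically as a closed subspace of $\CzM^S$, identify the dual by Riesz, and describe $\overline{\cob}$ via Hahn--Banach as the set killed by $\cob^{\perp}$. You then compute $\cob^{\perp}$ by the change of variables $\int u\circ\alpha(s)\,d\lambda_s=\int u\,d(\alpha(s)_*\lambda_s)$ and Riesz uniqueness. Your remark that closure in $\coc$ equals closure in $\CzM^S$ is a detail the paper leaves implicit. You are also right that the computation yields $\sum_s(\alpha(s)_*\lambda_s-\lambda_s)=0$. The proposition is therefore true exactly when $\alpha(s)^{-1}_*\lambda_s$ is read as $\lambda_s\circ\alpha(s)^{-1}=\alpha(s)_*\lambda_s$, a point the paper's one-line proof does not address.

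However, your claim that relabelling $\lambda'_s:=\lambda_{s^{-1}}$ makes the two conventions interchangeable is false, and the ``either reading'' conclusion should be dropped. The relabelling does map $\cob^{\perp}$ onto the literal set (push-forward by $\alpha(s)^{-1}$). But it also changes the pairing: $\sum_s\int c(s,\cdot)\,d\lambda_s=\sum_s\int c(s^{-1},\cdot)\,d\lambda'_s$, which is not the functional in the statement.

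With the literal reading the proposition actually fails. Take $G=\Z$, $S=\{\pm1\}$, $f=\alpha(1)$ and $\phi=c(1,\cdot)$, so $c(-1,\cdot)=-\phi\circ f^{-1}$. For any finite signed measure $\beta$, the tuple $(\lambda_1,\lambda_{-1})=(f^2_*\beta,\,f_*\beta)$ satisfies $(f^{-1})_*\lambda_1-\lambda_1+f_*\lambda_{-1}-\lambda_{-1}=0$. Its pairing with $c$ is $\int(\phi\circ f^2-\phi)\,d\beta$. So the literal right-hand side forces $\phi\circ f^2=\phi$. For an irrational rotation this makes $\phi$ constant, which excludes every nonzero coboundary. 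Only your first reading gives a correct statement and proof.
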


\begin{proof}
By the Riesz and Hahn-Banach theorems, the closure of the subspace
$\cob\subset\coc\subset\CzM^S$ is the joint kernel of the bounded
functionals annihilating it, and a tuple $(\lambda_s)$ annihilates all
$\delta u$ if and only if $\sum_s\int (u\circ\alpha(s)-u)\,d\lambda_s=0$ for all $u\in\CzM$,
which is precisely the stated condition.
\end{proof}

The content of \cite{MOP77} is that for $G=\Z$ the implicit set
$\mathcal A(\alpha)$ reduces to invariant measures (via the Jordan
decomposition, which is preserved by $f_*$). For general $G$, each
$\alpha$-invariant probability $\mu\in\mathcal M(\alpha)$ and each
tuple of weights produces elements of $\mathcal A(\alpha)$, giving the
familiar obstructions: for $\mu\in\mathcal M(\alpha)$ the map
\[
\rho_\mu(c)\colon\; g\;\longmapsto\;\int_M c(g,\cdot)\,d\mu
\]
is a homomorphism $G\to\R$ (this is immediate from \eqref{eq:cocycle} and
invariance), and $\rho_\mu(c)=0$ for every coboundary. When the action is amenable in the sense that $\mathcal M(\alpha)\ne\emptyset$
- automatic when the group $G$ is amenable - these obstructions are
non-trivial and one expects, in analogy with \cite{MOP77}, positive answer to the following question.

\begin{question}\label{q:amenable-closure}
For $G$ finitely generated and amenable, does one have
\[
\overline{\cob}
\;=\; \bigl\{\, c\in\coc \;:\; \rho_\mu(c)=0,\ \
\forall\,\mu\in\mathcal M(\alpha) \,\bigr\}\ ?
\]

\end{question}

For non-amenable actions, by contrast, $\mathcal M(\alpha)$ may be
empty (e.g.\ boundary actions of non-elementary hyperbolic groups, or
$\mathrm{PSL}_2(\R)$-type actions on the circle), all homomorphism
obstructions then vanish, and the following question is expected to have affirmative answer.

\begin{question}\label{q:dense}
Let $\alpha$ be an action with $\mathcal M(\alpha)=\emptyset$ (say, the
boundary action of a free group on its Cantor boundary). Is
$\overline{\cob}=\coc$, i.e.\ is every continuous cocycle a uniform
limit of coboundaries? Equivalently, is $\mathcal A(\alpha)$ reduced to
tuples annihilating all cocycles?
\end{question}

Thus the genuine amenable/non-amenable dichotomy in this story concerns
which obstructions cut out $\overline{\cob}$ and not whether
$\cob$ itself is closed, which by Theorem A is decided by finiteness
alone.

\section{Why spectral gaps do not restore $C^0$-stability}
\label{sec:gap}

Before the proof in Section \ref{sec:unstable} one might have conjectured
that suitable non-amenable actions are cohomologically $C^0$-stable,
i.e.\ that they are counterexamples to the ``only periodic'' half of
Theorem A. We explain why the natural candidates fail, since the
mechanism is instructive.

\subsection{Rotation actions with spectral gap}
Let $F\le SO(3)$ be a free group of rank $2$ whose generators act on
$\mathbb S^2$ with a spectral gap, i.e.\ the averaging (Markov) operator
$P=\frac{1}{2|S|}\sum_{s\in S}\,u\circ s$ satisfies
$\|P u\|_{L^2}\le\theta\|u\|_{L^2}$ on
$L^2_0(\mathbb S^2,\mathrm{Leb})$ for some $\theta<1$. The existence of such
$F$ goes back to Drinfeld \cite{Dri84}, and holds for all generic and
all algebraic generators by Bourgain-Gamburd \cite{BG08}. The gap
implies the coercive inequality
\[
\Bigl\| u - \textstyle\int u \Bigr\|_{L^2}
\;\le\; C \max_{s\in S}\ \norm{u\circ s-u}_{L^2},
\]
which is exactly $L^2$-version of \eqref{eq:gap-ineq} (with
$\Inv=\{\text{constant functions}\}$, by ergodicity). If the same inequality held in
the uniform norm, Lemma~\ref{lem:open-mapping} would be contradicted
nowhere and the action would be a stable infinite action. Theorem A
says it does not hold, and Construction~\ref{con:tent} exhibits the
counterexample: $u_R$ is a tent, in the word metric, over the
ball $\{g\cdot x_0: |g|<R\}$ of a single orbit, thickened at a tiny
scale $r$. Its uniform ``gradient'' is $\le 1/R$, while its
oscillation along the orbit is $1$. There is no conflict with the
spectral gap: $u_R$ is supported on a set of Lebesgue measure
$O\bigl((|S|+1)^{R} r^{2}\bigr)$, which is made arbitrarily small by
the choice of $r$, so $\|u_R\|_{L^2}$ is negligible and the $L^2$
inequality is comfortably satisfied. In other words expansion and spectral gaps are $\ell^2$-phenomena. The uniform norm does not see measure, and every graph of large
diameter carries $\ell^\infty$-slowly-varying non-constant (tent) functions. \\

This is the same reason why expander graphs, despite their gap, admit
$1/\!\diam$-Lipschitz functions of oscillation $1$ (distance
functions), the diameter being $\asymp\log$ of the size.

\subsection{Hyperbolic-type actions}
One might instead hope that hyperbolicity yields $C^0$-stability, by
analogy with Liv\v sic theory. But this fails already for $G=\Z$.
Anosov diffeomorphisms are cohomologically H\"older-stable
\cite{Liv72} and even $C^r$-stable for $r\in[2,\infty]$
\cite{dlLMM86}, yet they are aperiodic, hence cohomologically
$C^0$-unstable by Kocsard's theorem. The mechanism is the same,
Liv\v sic's proof produces a transfer function with a modulus of
regularity, and the closed-range inequality holds between H\"older
norms; it degenerates as the H\"older exponent tends to $0$. Boundary
actions of hyperbolic groups, which enjoy strong topological stability
properties in the sense of structural stability 
are therefore not expected to be (and by Theorem A are not) cohomologically $C^0$-stable either.

\subsection{Where the gap does give closed range}
Closedness of the coboundary space is a
property of the dynamics paired coupled with function space:
\begin{itemize}
\item in $C^0$: closed if and only if the acting group is finite (Theorem A) -
finiteness is the only mechanism;
\item in H\"older/$C^r$ spaces: hyperbolicity gives closedness
\cite{Liv72,dlLMM86};
\item in $L^2(\mu)$ of an invariant measure: closedness of
$\delta(L^2)$ is equivalent to the absence of almost invariant
vectors in $L^2_0$, hence holds for actions with spectral gap and, for
groups with Kazhdan's property (T), for all their
measure-preserving actions;
\item in $C^\infty$: for circle diffeomorphisms, arithmetic
(Diophantine) conditions decide \cite{AK11}. See the next Section \ref{sec:smooth}.
\end{itemize}

\section{$C^0$ versus $C^\infty$: commuting circle diffeomorphisms
and the simultaneous Diophantine condition}\label{sec:smooth}

Theorem A shows that the $C^0$-category does not see the arithmetic. Any $\Z^k$-action on $S^1$ containing a diffeomorphism of irrational
rotation number has infinite image and is therefore $C^0$-unstable,
regardless of any Diophantine property of the rotation numbers. The
smooth category behaves in the opposite way. For a single map, the
definitive statement is the following.

\begin{theorem}[Avila-Kocsard \cite{AK11}]\label{thm:AK}
Let $f\in\Diff^\infty_+(\mathbb S^1)$ have no periodic points, with rotation
number $\rho=\rho(f)$. $f$ is cohomologically $C^\infty$-stable, i.e.\ $B(f,C^\infty(S^1))$ is closed in $C^\infty(\mathbb S^1)$ if and
only if $\rho$ is Diophantine. Moreover, the space of $f$-invariant
distributions is spanned by the unique invariant measure $\mu_f$, so in the stable case $H^1$ is one-dimensional.
\end{theorem}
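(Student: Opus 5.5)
The plan is to split according to whether $\rho$ is Diophantine, and in both directions to reduce to the linear model $R_\rho(x)=x+\rho$. For the rigid rotation I would first do a Fourier computation. The equation $u\circ R_\rho-u=\phi$ reads $\hat u(n)(e^{2\pi i n\rho}-1)=\hat\phi(n)$ for $n\neq0$. If $\rho$ is Diophantine, $|e^{2\pi i n\rho}-1|\ge c|n|^{-\tau}$, so every $\phi\in C^\infty$ with $\hat\phi(0)=0$ is a coboundary with a tame loss of derivatives. Hence $B(R_\rho,C^\infty)=\{\phi:\int\phi\,dx=0\}$, which is closed.

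If $\rho$ is irrational but not Diophantine, I would take $q_n$ with $|e^{2\pi i q_n\rho}-1|$ smaller than any power of $q_n$ and set $u_n=q_n^{-N_n}e^{2\pi i q_n x}$ (real parts), with $N_n\to\infty$ chosen so that every $C^k$-norm of $\delta u_n$ tends to zero. Then $\sum_n \delta u_n$ converges in $C^\infty$ to some $\phi$ with zero mean. This $\phi$ lies in the closure of the coboundaries, but the only formal solution has $\hat u(q_n)=q_n^{-N_n}$, which decays only like a fixed power of $q_n$ along a subsequence (choose $N_n$ bounded along it). So $\phi$ is not a $C^\infty$ coboundary, and $R_\rho$ is unstable. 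Since a $C^\infty$ conjugacy $h$ transports $B(f,C^\infty)$ onto $B(R_\rho,C^\infty)$ via $\phi\mapsto\phi\circ h^{-1}$, stability is a conjugacy invariant on linearizable maps.

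For ``Diophantine $\Rightarrow$ stable'' I would then invoke Herman--Yoccoz: every $f\in\Diff^\infty_+(\mathbb S^1)$ with Diophantine rotation number is $C^\infty$-conjugate to $R_\rho$. Stability then follows from the linear computation, and $H^1$ is one-dimensional, spanned by $\phi\mapsto\int\phi\,d\mu_f$.

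For the converse, suppose $\rho$ is not Diophantine and $f$ is stable. The key step, and the main obstacle, is the statement that the only $f$-invariant distribution is $\mu_f$, i.e.\ that $\overline{B(f,C^\infty)}=\{\phi:\int\phi\,d\mu_f=0\}$. For linearizable $f$ this is Fourier analysis, but for non-linearizable (Liouville) $f$ it requires a genuine argument. I would try to prove it by renormalization: for $q_n$ the denominators of $\rho$, the Denjoy--Koksma-type control of $f^{q_n}$ in $C^\infty$ obtained from the distortion estimates of Herman and Yoccoz lets one build, for each zero-$\mu_f$-mean $\phi$, approximate solutions $u_n$ with $\delta u_n\to\phi$ in $C^\infty$. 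Granting this, stability gives $B(f,C^\infty)=\ker\mu_f$.

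Now $\log Df$ is smooth, and $\int\log Df\,d\mu_f=0$ because the Lyapunov exponent of a circle homeomorphism without periodic points vanishes. Hence $\log Df=v\circ f-v$ with $v\in C^\infty$. The function $h(x)=\int_0^x e^{v}\big/\int_{\mathbb S^1}e^{v}$ is then a $C^\infty$ diffeomorphism with $(h\circ f)'=h'$, so $h\circ f\circ h^{-1}$ is a rotation. Therefore $f$ is linearizable, contradicting the instability of the non-Diophantine rotation established above. The uniqueness of the invariant distribution in the stable case is exactly the key step.
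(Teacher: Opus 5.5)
\textbf{There is a genuine gap: the step you grant is the main theorem of \cite{AK11}.}

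The paper gives no proof of this statement; it quotes it from \cite{AK11}. Most of your reduction is sound:
\begin{itemize}
\item the Fourier computation for $R_\rho$;
\item invariance of stability under smooth conjugacy;
\item Herman--Yoccoz for ``Diophantine $\Rightarrow$ stable'';
\item for the converse: stability together with $\overline{B(f,C^\infty)}=\ker\mu_f$ makes $\log Df$ (zero $\mu_f$-mean) a smooth coboundary. Hence $f$ is smoothly linearizable, hence unstable by the Liouville computation, a contradiction.
\end{itemize}

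The problem is the step you ``grant''. It is not an auxiliary lemma. It is the main theorem of \cite{AK11}, and it is literally the ``Moreover'' clause of the statement, asserted for every $f$ without periodic points. Without it, you have proved only three things:
\begin{itemize}
\item ``Diophantine $\Rightarrow$ stable'';
\item ``stable and linearizable $\Rightarrow$ Diophantine'';
\item uniqueness of the invariant distribution for linearizable $f$, where Fourier analysis works for any irrational $\rho$.
\end{itemize}
The case that carries all the difficulty is a non-linearizable $f$ with Liouville $\rho$. It is untouched, both for the dichotomy and for the description of invariant distributions.

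\textbf{The renormalization paragraph does not fill this gap.} Denjoy--Koksma applied to $\log Df$, which has bounded variation, gives only a uniform bound on $\log Df^{q_n}$. That is first-derivative control of $f^{q_n}$. To get $\delta u_n\to\phi$ in every $C^r$ (equivalently, to show that an invariant distribution of order $r$ sees only $\int\phi\,d\mu_f$), you need more. You must control higher derivatives of the iterates $f^k$, for $|k|$ of the order of the denominators $q_n$, and of the corresponding Birkhoff sums of $\phi$. This must hold with no arithmetic hypothesis on $\rho$. Your sketch names the tools but never explains how the derivative losses in $\phi\circ f^k$, accumulated over that many iterates, are controlled. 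That is exactly what \cite{AK11} has to do.

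\textbf{Two smaller slips.}
\begin{itemize}
\item With $\log Df=v\circ f-v$, the linearizing map needs $h'\propto e^{-v}$, not $e^{v}$. Compute $(h\circ f)'=(h'\circ f)\,Df$ to see this.
\item In the Liouville rotation example you do not need $N_n\to\infty$; take $N_n=0$. The bound $|e^{2\pi i q_n\rho}-1|\le q_n^{-n}$ alone makes $\sum_n\delta u_n$ converge in $C^\infty$. Meanwhile the formal solution has $\hat u(q_n)=1/2$ after taking real parts, which does not decay, so it is not $C^\infty$.
\end{itemize}
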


For $\Z^k$-actions the relevant arithmetic condition is the one
discovered by Moser \cite{Mos90} in his study of the local version of
the problem of smooth linearization of commuting circle maps.

\begin{definition}[SDC]\label{def:SDC}
A vector $\rho=(\rho_1,\dots,\rho_k)\in\R^k$ satisfies the
simultaneous Diophantine condition $\mathrm{SDC}(\gamma,\tau)$,
$\gamma>0$, $\tau>0$, if
\[
\max_{1\le i\le k}\ \| q\,\rho_i \|_{\Z}
\;\ge\; \frac{\gamma}{|q|^{\tau}},
\qquad \forall\, q\in\Z\smallsetminus\{0\},
\]
where $\|\cdot\|_\Z$ is the distance to the nearest integer. We say
$\rho$ is SDC if it satisfies $\mathrm{SDC}(\gamma,\tau)$ for some
$\gamma,\tau$.
\end{definition}

If some $\rho_i$ is Diophantine then $\rho$ is trivially SDC. The
interesting SDC vectors are tuples of Liouville numbers whose
denominators of good rational approximation never synchronize.
Conversely, $\rho$ fails SDC exactly when all coordinates admit
anomalously good rational approximations along one common sequence of
denominators. \\

Two known results frame the conjecture below. First, Fayad and Khanin
proved the global form of Moser's conjecture from \cite{Mos90}.

\begin{theorem}[Fayad-Khanin \cite{FK09}]\label{thm:FK}
If $f_1,\dots,f_k\in\Diff^\infty_+(\mathbb S^1)$ commute and their rotation
vector $\rho=(\rho(f_1),\dots,\rho(f_k))$ is SDC, then the action is
simultaneously $C^\infty$-conjugate to the action by the rotations
$R_{\rho_1},\dots,R_{\rho_k}$.
\end{theorem}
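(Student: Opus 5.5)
This statement is the Fayad--Khanin theorem. It is quoted from \cite{FK09} and not proved in this paper, so the following is a plan for how one would prove it, following the strategy of \cite{FK09}.

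The plan reduces simultaneous linearization to the one-dimensional Herman--Yoccoz theory along a well-chosen sequence of renormalizations.

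\textbf{Step 1: topological conjugacy.} Since $\rho$ is SDC, some $\rho_i$ is irrational. I first dispose of the case where some $f_i$ has Diophantine rotation number. By Herman--Yoccoz, that $f_i$ is $C^\infty$-conjugate, via some $h$, to $R_{\rho_i}$. Every $f_j$ commuting with $f_i$ is then conjugated by $h$ to a diffeomorphism commuting with an irrational rotation, hence to a rotation. This gives the conclusion in that case.

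In general there is an irrational $\rho_{i_0}$. By Denjoy, $f_{i_0}$ is topologically conjugate via $h$ to $R_{\rho_{i_0}}$. The same commutation argument shows that $h$ conjugates every $f_j$ to $R_{\rho_j}$. So it remains to prove that $h$ is $C^\infty$.

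\textbf{Step 2: interleave the continued fractions.} Consider the closest returns $q_n$ of the individual rotation numbers. SDC says that for every $q$, some coordinate $\rho_i$ has $\|q\rho_i\|_\Z\ge\gamma q^{-\tau}$. For each scale, I would pick a generator whose renormalization at that scale is \emph{not} too close to the identity, and use it as the "Diophantine-at-this-scale" direction.

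\textbf{Step 3: uniform control of renormalizations.} Herman's a priori estimates (Denjoy--Koksma, bounds on $\|Df^{q}\|$ via distortion) show that the renormalized commuting pairs or tuples at scale $q$ converge to rotations in $C^r$. The convergence is exponential in the depth of the renormalization. I would then show that the derivative of the conjugacy, $Dh$, is controlled by products of these renormalizations. At each scale, the SDC-selected generator forces the deviation from linear to contract. The loss is only polynomial in $q$ (from $\gamma q^{-\tau}$), while the gain is exponential. Summing over scales then gives $\log Dh\in C^{r}$ for every $r$.

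\textbf{Main obstacle.} The hardest step is Step 3: turning a "Diophantine at each scale for some coordinate" condition into convergent $C^r$ estimates. The generator realizing the good approximation changes from scale to scale. Passing between generators requires quantitative almost-commutation estimates for the renormalized maps. Concretely, one needs that $f_i^{q}$ and $f_j^{q'}$ are both $C^r$-close to translations, with errors bounded by $\|q\rho_i\|$ and a uniform constant. The approach I would try first is Fayad--Khanin's: show that the $C^r$-distance of the renormalized tuple to translations is bounded by a quantity decaying like $\max_i\|q\rho_i\|^{1+\epsilon}$, and then iterate this KAM-like bootstrap across scales.
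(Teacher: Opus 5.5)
The paper does not prove this statement. It is quoted from \cite{FK09} and used only as a black box, for the implication SDC $\Rightarrow$ stability in Conjecture~\ref{conj:SDC}. So your proposal can only be measured against what a proof would require.

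Your Step~1 is correct. SDC forces some $\rho_i$ to be irrational, and Denjoy gives a topological conjugacy $h$. Since the centralizer of an irrational rotation in $\Homeo(\mathbb S^1)$ consists of rotations, $h$ conjugates every $f_j$ to $R_{\rho_j}$. Everything therefore reduces to the regularity of $h$, and a Diophantine coordinate is handled by Herman--Yoccoz.

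Step~2 points at the right arithmetic input, but you should state it precisely. SDC guarantees (in fact is equivalent to) the following: the merged family of closest-return scales $\ell^{(i)}_m=\|q^{(i)}_m\rho_i\|_\Z$ over all coordinates has only polynomial gaps. That is, for some $C$, every interval $[\ell^{C},\ell]$ with $\ell$ small contains one of them. To see this, suppose not, and let $q^{(i)}_{m_i+1}$ be the first denominator of $\rho_i$ with scale $<\ell^{C}$. Then $q^{(i)}_{m_i+1}<1/\ell$, and $Q=\prod_i q^{(i)}_{m_i+1}<\ell^{-k}$ satisfies $\max_i\|Q\rho_i\|_\Z<\ell^{C-k+1}$. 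This contradicts SDC once $C>k-1+k\tau$.

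The genuine gap is Step~3, which is where the theorem actually lives, and the mechanism you describe there does not close. You want gains exponential in the renormalization depth $n$ to beat losses polynomial in $q$. After Step~1, however, every irrational $\rho_i$ is Liouville. So $q^{(i)}_{n+1}\ge (q^{(i)}_n)^K$ occurs for arbitrarily large $K$, and $\lambda^n (q^{(i)}_n)^{\tau}$ is unbounded for every $\lambda<1$. The same bookkeeping already fails for a single bounded-type number unless $\lambda$ is tuned to $\tau$. Depth is the wrong currency.

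The Herman-type bounds you invoke do not help either. Denjoy--Koksma only gives $\|\log Df^{q_n}\|_{C^0}\le\operatorname{Var}(\log Df)$, with no decay at all.

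What is missing is an a priori estimate with three features:
\begin{itemize}
\item it holds for every $C^\infty$ circle diffeomorphism with irrational rotation number;
\item its constants are independent of the arithmetic of $\rho$;
\item it bounds the $C^r$-distance from $f^{q_n}$ to a rotation by a power of $\ell_n=\|q_n\rho\|_\Z$ itself, of the form $C_{r,\epsilon}\,\ell_n^{1-\epsilon}$.
\end{itemize}
This is the kind of estimate that forms the analytic core of \cite{FK09}, and you give no argument for it.

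With such an estimate the scheme does close:
\begin{itemize}
\item Thin the merged scales from Step~2 to $\theta_1>\theta_2>\cdots$ with $\theta_n^{C}\le\theta_{n+1}\le\theta_n/2$, and let $g_n$ be the corresponding closest-return iterate of the appropriate generator.
\item Use commutativity to realize a dense set of rotation numbers by compositions $g_1^{b_1}\cdots g_N^{b_N}$ with $b_n\le\theta_{n-1}/\theta_n$.
\item Then $\|\log D(g_1^{b_1}\cdots g_N^{b_N})\|_{C^0}\lesssim\sum_n\theta_{n-1}\theta_n^{-\epsilon}\le\sum_n\theta_{n-1}^{1-C\epsilon}<\infty$, and similarly in $C^r$ by composition estimates.
\item By Arzel\`a--Ascoli, these uniform bounds pass to all $h^{-1}R_th$. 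This gives $h\in C^\infty$, since on lifts $h=\int_0^1 h^{-1}R_th\,dt$ up to an additive constant.
\end{itemize}

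Finally, the target you propose, a bound by $\max_i\|q\rho_i\|_\Z^{1+\epsilon}$, cannot hold. Take an action smoothly conjugate to rotations by a non-affine $h$. Then $f_i^{q}(x)=x\pm\|q\rho_i\|_\Z/h'(x)+O(\|q\rho_i\|_\Z^2)$, so the distance of $f_i^q$ to translations is of exact order $\|q\rho_i\|_\Z$. This holds in the original coordinates and after rescaling to a fundamental domain. The correct estimate carries a loss $\ell^{-\epsilon}$ rather than a gain, and it is precisely the polynomial gap structure coming from SDC that absorbs this loss.
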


Second, the linear model is completely computable.

\begin{proposition}[Linear model]\label{prop:linear}
Let $\rho\in\R^k$ have at least one irrational coordinate and let
$\alpha_\rho\colon\Z^k\to\Diff^\infty_+(\mathbb S^1)$ be the action by the
rotations $R_{\rho_i}$. Then $\alpha_\rho$ is cohomologically
$C^\infty$-stable (the space of smooth coboundaries is closed in the space of smooth
cocycles with its Fr\'echet topology) if and only if $\rho$ is SDC. Moreover, in
the stable case
\[
B\bigl(\alpha_\rho, C^\infty\bigr)
= \Bigl\{ (\phi_1,\dots,\phi_k) \ \text{smooth cocycle} :
\int_{S^1}\phi_i\,dx = 0,\ i=1,\dots,k \Bigr\}.
\]
\end{proposition}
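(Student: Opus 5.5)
Work in Fourier series on $S^1=\R/\Z$, writing $\phi_i=\sum_{n}\hat\phi_i(n)e^{2\pi i n x}$. For $u$ smooth, $(\delta u)_i=u\circ R_{\rho_i}-u$ has coefficients $\hat u(n)\bigl(e^{2\pi i n\rho_i}-1\bigr)$. The cocycle relation $\phi_i\circ R_{\rho_j}-\phi_i=\phi_j\circ R_{\rho_i}-\phi_j$ becomes
\[
\hat\phi_i(n)\bigl(e^{2\pi i n\rho_j}-1\bigr)=\hat\phi_j(n)\bigl(e^{2\pi i n\rho_i}-1\bigr)
\]
for all $n$.

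\textbf{Formal solution.} Fix $n\neq0$. Since some $\rho_{i_0}$ is irrational, $e^{2\pi i n\rho_{i_0}}\ne1$. The relation then forces $\hat\phi_i(n)=\hat u(n)(e^{2\pi i n\rho_i}-1)$ for every $i$, with the single well-defined value
\[
\hat u(n):=\frac{\hat\phi_{i_0}(n)}{e^{2\pi i n\rho_{i_0}}-1}.
\]
More usefully, choose $i=i(n)$ maximizing $\|n\rho_i\|_\Z$ and use $\hat u(n)=\hat\phi_{i(n)}(n)/(e^{2\pi i n\rho_{i(n)}}-1)$. For $n=0$ the equation forces $\hat\phi_i(0)=0$. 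So a smooth cocycle with zero means has a unique formal solution with $\hat u(0)=0$. The inclusion ``$\subseteq$'' in the displayed formula is trivial, since coboundaries of rotations have zero mean.

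\textbf{SDC implies stability.} Use $|e^{2\pi i\theta}-1|\ge 4\|\theta\|_\Z$. Under $\mathrm{SDC}(\gamma,\tau)$,
\[
|\hat u(n)|\le \frac{|n|^\tau}{4\gamma}\max_i|\hat\phi_i(n)|.
\]
The loss is a fixed number of derivatives, so $u\in C^\infty$. This proves that every zero-mean smooth cocycle is a coboundary. The right-hand side is the joint kernel of the continuous functionals $\phi\mapsto\int\phi_i$, hence closed in the Fréchet topology. This gives stability and the explicit description of $B(\alpha_\rho,C^\infty)$.

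\textbf{Failure of SDC implies instability.} This is the step I expect to be the main obstacle, namely building a non-coboundary in the closure. If $\rho$ is not SDC, then for every $m$ there is $q_m\neq0$ with $\max_i\|q_m\rho_i\|_\Z<|q_m|^{-m}$. Irrationality of $\rho_{i_0}$ forces $|q_m|\to\infty$, so I pass to a subsequence with $|q_m|$ strictly increasing. Define
\[
\hat u(q_m)=|q_m|^{-1}\quad\text{(or a slowly decaying weight)},\qquad \hat u(-q_m)=\overline{\hat u(q_m)},
\]
and $\hat u(n)=0$ otherwise. The cocycle $\phi_i$ with coefficients $\hat u(n)(e^{2\pi i n\rho_i}-1)$ has $|\hat\phi_i(q_m)|\le 2\pi|q_m|^{-1-m}$. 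This decays faster than any polynomial, so each $\phi_i\in C^\infty$. The tuple satisfies the cocycle relation identically, since it is formally $\delta u$.

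The truncations $u_N$ of $u$ are trigonometric polynomials, and $\delta u_N\to\phi$ in every $C^r$ norm by the super-polynomial tail bound. Hence $\phi\in\overline{B}$. By uniqueness of the formal solution (up to constants) from the first step, any smooth, or even $L^2$ or distributional, solution must have Fourier coefficients $\hat u(q_m)$. With the weight $|q_m|^{-1}$, $u$ lies in $L^2$ but not in $C^\infty$. To exclude even continuous solutions, I would instead take $\hat u(q_m)=1$, so $u$ is not a function at all, while $\phi$ remains smooth by the same estimate. Either way $\phi\notin B(\alpha_\rho,C^\infty)$, so the coboundary space is not closed.

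The delicate point is only this uniqueness of the formal solution, and it follows from the irrationality of $\rho_{i_0}$.
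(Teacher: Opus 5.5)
Your proof follows the paper's own Fourier argument, and it is correct except for one step. Under SDC you divide by the best-placed small divisor $i(n)$. When SDC fails you build a lacunary cocycle on the frequencies $\pm q_m$, whose truncated transfer functions give coboundaries converging to it in $C^\infty$. The paper lets $\hat u(q_m)$ grow super-polynomially, but your weights $1$ or $|q_m|^{-1}$ work equally well.

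The faulty step is the claim that ``irrationality of $\rho_{i_0}$ forces $|q_m|\to\infty$'', which is false as written. The witnesses $q=\pm1$ satisfy $\max_i\|q\rho_i\|_\Z<|q|^{-m}=1$ for every $m$, so arbitrarily chosen witnesses need not grow. Irrationality only rules out a fixed $q$ with $|q|\ge2$ recurring.

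The repair uses exactly the ingredient you name. Suppose that for some $m$ every witness had $|q|\le Q$, and set $c:=\min_{0<|q|\le Q}\|q\rho_{i_0}\|_\Z\in(0,\tfrac12]$. Then $\max_i\|q\rho_i\|_\Z\ge c|q|^{-m}$ for all $q\ne0$, i.e.\ $\rho$ satisfies $\mathrm{SDC}(c,m)$, a contradiction. So each $m$ admits witnesses of arbitrarily large $|q|$, and you can choose $|q_m|$ strictly increasing directly.
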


\begin{proof}[Sketch of proof]
Passing to Fourier coefficients, the compatibility relations of
Remark~\ref{rem:closed-in-product} read
$\bigl(e^{2\pi i q\rho_i}-1\bigr)\widehat{\phi_j}(q)
=\bigl(e^{2\pi i q\rho_j}-1\bigr)\widehat{\phi_i}(q)$ for all $q$.
Assuming the means vanish, define for $q\ne0$
\[
\widehat{u}(q) :=
\frac{\widehat{\phi_{i(q)}}(q)}{e^{2\pi i q\rho_{i(q)}}-1},
\qquad
i(q):=\operatorname*{arg\,max}_{1\le i\le k}\ \|q\rho_i\|_\Z ,
\]
which is well defined and independent of admissible choices by
compatibility. Under SDC, $|e^{2\pi i q\rho_{i(q)}}-1|\ge
4\gamma|q|^{-\tau}$, so the division loses only finitely many
derivatives and $u\in C^\infty$, with tame estimates. Hence the
coboundary space equals the closed space of zero-mean cocycles. If SDC
fails, pick $q_n\to\infty$ with $\max_i\|q_n\rho_i\|_\Z\le
|q_n|^{-n}$, and build a zero-mean smooth cocycle supported on the
frequencies $\pm q_n$ whose (unique, since some $\rho_i$ is
irrational) formal transfer function has Fourier coefficients growing
faster than any polynomial. Its truncations are coboundaries
converging to it in $C^\infty$, so the coboundary space is not closed.
Compare \cite[ 2]{Kat01} for the classical case of tori.
\end{proof}

Since cohomological $C^\infty$-stability is invariant under smooth
conjugacy, Theorem~\ref{thm:FK} and Proposition~\ref{prop:linear}
immediately give: an SDC action of $\Z^k$ by smooth circle
diffeomorphisms is cohomologically $C^\infty$-stable. We conjecture
that SDC is also necessary.

\begin{conjecture}\label{conj:SDC}
Let $\alpha\colon\Z^k\to\Diff^\infty_+(S^1)$ be generated by commuting
diffeomorphisms $f_1,\dots,f_k$ such that at least one $\rho(f_i)$ is
irrational, and let $\rho=(\rho(f_1),\dots,\rho(f_k))$. Then $\alpha$
is cohomologically $C^\infty$-stable if and only if $\rho$ is SDC.
\end{conjecture}

Some remarks on the status of Conjecture~\ref{conj:SDC}:

\begin{enumerate}
\item ($\Leftarrow$) holds, as explained above, by
\cite{FK09} + Proposition~\ref{prop:linear}.
\item ($\Rightarrow$) holds for linearizable actions: if
$\alpha$ is $C^\infty$-conjugate to $\alpha_\rho$ and $\rho$ is not
SDC, Proposition~\ref{prop:linear} gives instability. For $k=1$ this,
combined with Herman-Yoccoz theory and the invariant-distribution
analysis, is exactly the content of Theorem~\ref{thm:AK}
(\cite{AK11}), whose proof however does not pass through
linearization in the Liouville case.
\item The genuinely open case is therefore: non-SDC rotation vector
and non-linearizable (in particular, singular) conjugacy. Note
that failure of SDC forces every $\rho_i$ to be Liouville, and
Anosov-Katok-type constructions produce $\Z^k$-actions with
prescribed non-SDC Liouville rotation vectors whose conjugacy to the
linear model is purely singular. See \cite{FK09} and \cite{P26} for such examples with purely singular simultaneous linearization. For these actions the linear computation is unavailable, and
instability should instead be detected by exhibiting invariant
distributions of higher order, or almost-coboundaries with divergent
transfer functions, in the spirit of \cite{AK11}. Such examples are
the natural test cases for the conjecture.
\item Higher-dimensional analogue. For $\Z^m$-actions on $\T^d$
generated by commuting diffeomorphisms close to a simultaneously
Diophantine family of translations a
perturbative rigidity/classification theorem is proved in \cite{P21}, the KAM scheme being governed precisely by an SDC condition on the rotation set. The
corresponding cohomological-stability conjecture on $\T^d$ reads: a
$\Z^m$-action by diffeomorphisms of $\T^d$, $C^\infty$-conjugate (or
a priori just semi-conjugate) to an ergodic translation action with
rotation set $(\rho^{(1)},\dots,\rho^{(m)})\in(\R^d)^m$, is
cohomologically $C^\infty$-stable if and only if
\[
\exists\,\gamma,\tau>0:\quad
\max_{1\le i\le m}\ \| \langle q,\rho^{(i)}\rangle \|_{\Z}
\;\ge\; \frac{\gamma}{|q|^{\tau}},
\qquad \forall\, q\in\Z^{d}\smallsetminus\{0\},
\]
the linear case again being a Fourier computation as in
Proposition~\ref{prop:linear} (cf.\ \cite[ 2]{Kat01} for $m=1$).
\item Finally, Theorem A supplies the $C^0$-counterpart for every one
of these actions: infinite image, hence $C^0$-unstable. The pair
(Theorem A, Conjecture~\ref{conj:SDC}) thus exhibits, on the same
class of systems, the transition from a purely algebraic dichotomy in
$C^0$ to a purely arithmetic one in $C^\infty$. The finite
differentiability classes $C^r$, $1\le r<\infty$, interpolating
between the two, are essentially unexplored even for $k=1$ (cf.\ the
invariant distributions of low regularity of \cite{NT13}).
\end{enumerate}

We conclude by stating few questions that would be worth answering.

\begin{enumerate}
\item (\emph{Non-finitely generated groups.}) Theorem A uses finite
generation through the word metric and through the finiteness of the
set of subgroups of bounded index. For a countable, non-finitely
generated $G$ (e.g.\ $G=\Q$), $\coc$ carries the topology of uniform
convergence on each $g$. Does the stability still holds if and only if the acting group has finite image?
\item (\emph{Flows and topological groups.}) For $\R$-actions
(continuous flows) with cocycles $c\colon\R\times M\to\R$, the tent
construction has an obvious continuous-time analogue. Does the
stability hold if and only if the flow factors through a compact group
(e.g.\ periodic flows). 
\item (\emph{Closure of the coboundaries.}) Questions
\ref{q:amenable-closure} and \ref{q:dense}: identify
$\overline{\cob}$ for amenable actions via invariant measures, and
see whether coboundaries are uniformly dense in $\coc$ for actions
without invariant measures.
\item (\emph{Quantitative version.}) Estimate the distortion
$\norm{\bar\delta^{-1}\!\restriction_{\text{finite-dim.\ subspaces}}}$
in terms of orbit growth. Remark~\ref{rem:quantitative} suggests a
$\log$-law for actions of exponential orbit growth versus a power law
in the abelian case.
\item (\emph{The smooth conjecture.}) Conjecture~\ref{conj:SDC},
starting with the purely singular non-SDC examples of \cite{P26}.
\end{enumerate}


\begin{thebibliography}{MOP77}

\bibitem[AK11]{AK11}
A.~Avila and A.~Kocsard,
\emph{Cohomological equations and invariant distributions for minimal
circle diffeomorphisms},
Duke Math.\ J.\ \textbf{158} (2011), no.~3, 501-536.

\bibitem[BG08]{BG08}
J.~Bourgain and A.~Gamburd,
\emph{On the spectral gap for finitely-generated subgroups of
$SU(2)$},
Invent.\ Math.\ \textbf{171} (2008), no.~1, 83-121.

\bibitem[dlLMM86]{dlLMM86}
R.~de~la~Llave, J.~M.~Marco and R.~Moriy\'on,
\emph{Canonical perturbation theory of Anosov systems and regularity
results for the Liv\v sic cohomology equation},
Ann.\ of Math.\ (2) \textbf{123} (1986), no.~3, 537-611.

\bibitem[Dri84]{Dri84}
V.~G.~Drinfel'd,
\emph{Finitely-additive measures on $S^2$ and $S^3$, invariant with
respect to rotations},
Funct.\ Anal.\ Appl.\ \textbf{18} (1984), no.~3, 245-246.

\bibitem[FK09]{FK09}
B.~Fayad and K.~Khanin,
\emph{Smooth linearization of commuting circle diffeomorphisms},
Ann.\ of Math.\ (2) \textbf{170} (2009), no.~2, 961-980.

\bibitem[Kat01]{Kat01}
A.~Katok,
\emph{Cocycles, cohomology and combinatorial constructions in ergodic
theory}, in collaboration with E.~A.~Robinson, Jr.,
Proc.\ Sympos.\ Pure Math., vol.~69, Amer.\ Math.\ Soc., Providence,
RI, 2001, pp.~107-173.

\bibitem[Koc13]{Koc13}
A.~Kocsard,
\emph{On cohomological $C^0$-(in)stability},
Bull.\ Braz.\ Math.\ Soc.\ (N.S.) \textbf{44} (2013), no.~3.
DOI:10.1007/s00574-013-0023-9.

\bibitem[Liv72]{Liv72}
A.~N.~Liv\v sic,
\emph{Cohomology of dynamical systems},
Izv.\ Akad.\ Nauk SSSR Ser.\ Mat.\ \textbf{36} (1972), 1296-1320.

\bibitem[Mos90]{Mos90}
J.~Moser,
\emph{On commuting circle mappings and simultaneous Diophantine
approximations},
Math.\ Z.\ \textbf{205} (1990), no.~1, 105-121.

\bibitem[MOP77]{MOP77}
J.~Moulin~Ollagnier and D.~Pinchon,
\emph{Syst\`emes dynamiques topologiques. I. \'Etude des limites de
cobords},
Bull.\ Soc.\ Math.\ France \textbf{105} (1977), no.~4, 405-414.

\bibitem[NT13]{NT13}
A.~Navas and M.~Triestino,
\emph{On the invariant distributions of $C^2$ circle diffeomorphisms
of irrational rotation number},
Math.\ Z.\ \textbf{274} (2013), 315-321.

\bibitem[P21]{P21}
B.~Petkovi\'c,
\emph{Classification of perturbations of Diophantine $\Z^m$ actions
on tori of arbitrary dimension},
Regul.\ Chaotic Dyn.\ \textbf{26} (2021), no.~6, 700-716.

\bibitem[P26]{P26}
B.~Petkovi\'c,
\emph{Commuting circle diffeomorphisms with non-simultaneously
Diophantine rotation numbers and purely singular linearization},
preprint (2026).

\end{thebibliography}
\end{document}